\newtheorem*{theorem}{Theorem}
\newtheorem*{lemma}{Lemma}
\newtheorem*{remark}{Remark}
\newtheorem*{corollary}{Corollary}
\def\essup{{\rm essup\,}}
\def\supp{{\rm supp\,}}
\def\ZI{\ensuremath{\mathbb I}}
\def\ZN{\ensuremath{\mathbb N}}
\def\zI{\ensuremath{\mathcal I}}
\def\ZT{\ensuremath{\mathbb T}}
\def\md#1#2\emd{\ifx0#1
\begin{equation*} #2 \end{equation*}\fi  
\ifx1#1\begin{equation}#2\end{equation}\fi   
\ifx2#1\begin{align*}#2\end{align*}\fi   
\ifx3#1\begin{align}#2\end{align}\fi    
\ifx4#1\begin{gather*}#2\end{gather*}\fi  
\ifx5#1\begin{gather}#2\end{gather}\fi   
\ifx6#1\begin{multline*}#2\end{multline*}\fi  
\ifx7#1\begin{multline}#2\end{multline}\fi  
\ifx8#1\begin{multline*}\begin{split}#2\end{split}\end{multline*}\fi
\ifx9#1\begin{multline}\begin{split}#2\end{split}\end{multline}\fi
}
\newcommand {\e }[1]{(\ref{#1})}
\begin{document}

\title{Divergence of general localized operators on the sets of measure zero }%
\author{G. A. Karagulyan}%
\address{Institute of Mathematics of Armenian
National Academy of Sciences, Baghramian Ave.- 24b, 375019,
Yerevan, Armenia}%
\email{g.karagulyan@yahoo.com}%

\subjclass{42A20}%
\keywords{ general operators, localization property, divergence on a set, divergence of Fourier series}%

\begin{abstract}
We consider sequences of linear operators $U_nf(x)$ with localization property. It is proved that for any set $E$ of measure zero there exists a set $G$ for which $U_n\ZI_G(x)$ diverges at each point $x\in E$. This result is a generalization of analogous theorems known for the Fourier sums operators with respect to different orthogonal systems.
\end{abstract}
\maketitle
\section{Introduction}
In 1876 P. Du Bois-Reymond \cite{Du} constructed an example of continuous function which trigonometric Fourier series diverges at some point. In 1923 A. N. Kolmogorov \cite{Kol} proved that for a function from $L^1(\ZT)$ divergence of Fourier series can be everywhere. On the other hand according to Carleson-Hant theorem (\cite{Car}, \cite{Hunt}) Fourier series of the functions from $L^p(\ZT),\, p>1,$ converge a.e.. A natural question is whether the Fourier series of a function from $L^p\,(p>1)$ or $C$ may diverge on a given arbitrary set of measure zero. In fact the investigation of this problem began before Carleson's theorem. First S. B. Stechkin \cite{Ste} in 1951 proved that for any set $E\subset \ZT$ of measure zero there exists a function $f\in L^2(0,2\pi)$ which Fourier series diverges on $E$. Then in 1963 L.~V.~Taikov \cite{Tai} proved $f$ can be taken from $L^p(0,2\pi)$  for any $1\le p<\infty $. In 1965 Kahane and Katznelson \cite{KaKa} proved the existence of a continuous complex valued function which diverges on a given set of measure zero. Essentially developing Kahane-Katznelson approach V.~V.~Buzdalin \cite{Buz} proved that for any set of measure zero there exists a continuous real function which Fourier series diverges on that set. The same question is investigated also for the other classical orthonormal systems.  Sh. V. Kheladze in \cite{Khe1} constructed a function from $L^p(0,1)$($1<p<\infty$) which Fourier-Walsh series diverges on a given set of measure zero. In another paper \cite{Khe2} he proved the same also for the Vilenkin systems. Then V. M. Bugadze \cite{Bug1} proved that for the Walsh system function in the mentioned theorem can be taken from $L^\infty$. In fact Bugadze proves the same also for Haar (\cite{Bug2}), Walsh-Paley and Walsh-Kachmaz systems(\cite{Bug1}). Haar system in such problems is considered also in the papers   M.~A.~Lunina \cite{Lun} and V.~I.~Prochorenko  \cite{Pro}. Recently  U.~Goginava \cite{Gog} proved that for any set of measure zero there exists a bounded function which Walsh-Fejer means diverges on that set. About other problems concerning the divergent Fourier series reader can find in the papers P.~L.~Ul'yanov \cite{Uly} and W.~L.~Wade \cite{Wade}.  We have noticed this phenomena is common for general sequences of linear operators with localization property. We consider sequences of linear operators
\md1\label{1}
U_nf(x)=\int_a^bK_n(x,t)f(t)dt,\quad n=1,2,\ldots,
\emd
with
\md1\label{2}
|K_n(x,t)|\le M_n.
\emd
We say the sequence \e{1} has a localization property ($L$-property) if for any function $f\in L^1(a,b) $ with $f(x)=1$ as $x\in I=(\alpha,\beta)(\subset [a,b])$ we have
\md0\label{3}
\lim_{n\to\infty }U_nf(x)=1\quad \hbox { as } x\in I,
\emd
and the convergence is uniformly in each closed set $A\subset I$. We prove the following
\begin{theorem}
If the sequence of operators \e{1} has a localization property, then for any set of measure zero $E\subset [a,b]$ there exists a set $G\subset [a,b]$ such that
\md0\label{4}
\liminf_{n\to\infty }U_n\ZI_G(x)\le 0,\quad \limsup_{n\to\infty }U_n\ZI_G(x)\ge 1 \hbox { for any } x\in E,
\emd
where $\ZI_G(x)$ denotes the characteristic function of $G$.
\end{theorem}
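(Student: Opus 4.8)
The plan is to build $G$ as a countable union of open intervals by an iterative ``diagonal'' construction, exploiting the $L$-property on one block of intervals while using the uniform bound \e{2} to control the damage done to the already-constructed blocks. First I would fix a decreasing sequence of open sets $W_1\supset W_2\supset\cdots\supset E$ with $|W_k|\to 0$, each $W_k$ a disjoint union of open intervals, and aim to produce $G=\bigcup_k G_k$ where $G_k$ is a finite union of small intervals sitting inside $W_k$. At stage $k$ I want two things: a large (relative to $W_k$) finite family of disjoint intervals $I^k_1,\dots,I^k_{m_k}$ whose union essentially covers $E$, on each of which I can force $U_n\ZI_G$ to be close to $1$ for some large $n$ (pushing the $\limsup$ up to $1$), alternating with stages where $G$ is locally empty near $E$ so that $U_n\ZI_G$ is close to $0$ for some other large $n$ (pushing the $\liminf$ down to $\le 0$).

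The key mechanism is this: if $J$ is one of the intervals and $A\subset J$ is a compact set, then applying the $L$-property to the function $\ZI_J$ (which equals $1$ on $J$) gives $U_n\ZI_J(x)\to 1$ uniformly on $A$; so there is an $N$ with $U_n\ZI_J(x)>1-\varepsilon$ for all $x\in A$ and $n\ge$ some chosen large index. Conversely, applying the $L$-property to a function that equals $1$ on a neighborhood of $A$ but such that $G$ is \emph{disjoint} from a smaller neighborhood of $A$ shows $U_n\ZI_G(x)$ is small there. The remaining pieces of $G$, living far away in other blocks $W_j$ ($j\ne k$), contribute $\int_{G\setminus W_k} K_n(x,t)\,dt$, which is bounded by $M_n\cdot|G\setminus W_k|$; here is the catch, and the reason the construction must be done in the right order.

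The main obstacle — and the heart of the argument — is exactly that the bound \e{2} gives only $|U_n f(x)|\le M_n\|f\|_1$, and $M_n\to\infty$ is allowed, so one cannot simply make later blocks ``small in measure'' and expect their influence on fixed earlier $n$'s to be negligible: it is not. The way around it is to reverse the dependence: the indices $n$ at stage $k$ are chosen \emph{after} the intervals of stage $k$ are fixed (using the $L$-property, which holds for \emph{every} $f$ and gives convergence, hence an $N$ depending on the already-built part), and then the later stages $j>k$ are forced to put their intervals inside $W_j$ which is chosen so small that, for the finitely many indices $n$ already committed at stages $\le k$, the quantity $M_n|W_j|$ is $<\varepsilon 2^{-k}$. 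Thus at the moment we commit stage-$k$ indices, only finitely many constraints from the future matter, and they can be met because each future $W_j$ is shrunk enough. I would organize this as an induction producing, at step $k$, a finite union of intervals $G_k\subset W_k$, an integer $n_k$, and a shrunken open set $W_{k+1}\subset W_k$ with $W_{k+1}\supset E$ and $M_{n_j}|W_{k+1}| < 2^{-k}$ for all $j\le k$, such that on a compact set $A_k$ with $E\subset \bigcup$ (interiors) one has $U_{n_k}\ZI_{G_1\cup\cdots\cup G_k}(x)$ within $2^{-k}$ of the target value ($1$ on odd steps, $0$ on even steps); the tail $\sum_{j>k}$ contributes at most $\sum_{j>k}2^{-(j-1)}\to 0$, so in the limit $G=\bigcup G_k$ does the job for every $x\in E$. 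A minor technical point to handle carefully is that $E$ has measure zero but need not be closed, so the ``compact exhaustion'' has to be arranged so that every $x\in E$ lies in infinitely many of the interiors used at odd steps and infinitely many at even steps; this is routine once $W_k\downarrow$ is set up, by taking the $A_k$ to exhaust $E$ from inside the interval structure.
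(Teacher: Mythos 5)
Your plan has a structural flaw that no amount of bookkeeping with the sets $W_k$ can repair: you build $G$ as an \emph{increasing} union $G=\bigcup_k G_k$ and ask, at odd stages, that $G$ contain full intervals $I^k_j$ covering (a compact piece of) $E$. But the moment a point $x\in E$ lies in such an open interval $I^k_j\subset G$, the $L$-property applied to $f=\ZI_G$ itself (which equals $1$ on $I^k_j$) forces $U_n\ZI_G(x)\to 1$; the sequence then \emph{converges} at $x$ and $\liminf_n U_n\ZI_G(x)=1$, so the first half of the conclusion is irrecoverably lost at $x$. Equivalently: your ``close to $0$'' mechanism requires the already-built set $H_k=G_1\cup\dots\cup G_k$ to miss a neighbourhood of $x$, while your ``close to $1$'' mechanism requires some $H_{k'}$ to contain a neighbourhood of $x$; since the $H_k$ increase, all stages of the first kind must precede all stages of the second kind, so for each fixed $x$ only finitely many stages of one type can occur, and you cannot get both $\limsup\ge 1$ and $\liminf\le 0$. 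The paper's construction is designed precisely to evade this: it takes a \emph{decreasing} chain of open neighbourhoods $G_1\supset G_2\supset\cdots\supset E$ with regular partitions $\zI_k$ and sets $G=\bigcup_i(G_{2i-1}\setminus G_{2i})$, so that $\ZI_G=\sum_l(-1)^{l+1}\ZI_{G_l}$ and, near a fixed $x\in E$, the local density of $G$ oscillates with the \emph{scale} $|I_k|$ (where $I_k\in\zI_k$ is the interval containing $x$): it is $\approx 1$ at odd scales and $\approx 0$ at even scales. No point of $E$ is interior to $G$ (indeed $E\cap G=\varnothing$), and the operator indices $\nu(I_k)$ are tuned to read off the density at scale $k$, producing the alternating values $1,0,1,0,\dots$.

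There is a second, independent gap: you claim the far-away contributions can be handled by the crude bound $M_n|W_j|$ alone, committing only finitely many indices per stage. But the uniformity in the $L$-property holds only on closed subsets of a single interval on which $f\equiv 1$; if $E$ is dense in $[a,b]$ (a dense set of measure zero is the typical case), every open set of small measure containing $E$ has infinitely many components, and no finite union of intervals of small total measure can contain $E$. So either you must commit infinitely many indices at a stage --- and then no single choice of $|W_j|$ makes $M_n|W_j|$ small for all of them, which is exactly why the paper first proves the Lemma giving the kernel majorant $\phi(|x-t|)$ and imposes the \emph{local} smallness $|G_{p+1}\cap I|<\delta(I)$ for every interval $I$ of every earlier partition --- or you cover only part of $E$ at each stage and must then guarantee that every point is revisited infinitely often at both parities, which your sketch does not arrange and which collides with the monotonicity problem above.
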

The result of the theorem can be applied to the Fourier partial sums operators with respect to all classical orthogonal systems(trigonometric, Walsh, Haar, Franklin and Vilenkin systems). Moreover instead of partial sum we can discuss also linear means of partial sums corresponding to an arbitrary regular method $T=\{a_{ij}\}$. All these operators have localization property. So the following corollary is an immediate consequence of the main result.
\begin{corollary}
Let $\Phi=\{\phi_n(x),\, n\in \ZN\}$, $x\in [a,b]$, be one of the above mentioned orthogonal systems and $T$ is an arbitrary regular linear method.  Then for any set $E$ of measure zero there exists a set $G\subset [a,b]$ such that the Fourier series of its characteristic function $f(x)=\ZI_G(x)$ with respect to $\Phi $ diverges on $E$ by $T$-method.
\end{corollary}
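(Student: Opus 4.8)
The plan is to construct $G$ as a countable union of small open sets clustered around $E$, layered in alternating ``annular'' fashion between rapidly shrinking open neighbourhoods of $E$, so that for every $x\in E$ the numbers $U_n\ZI_G(x)$ are forced arbitrarily close to $1$ along one sequence $n\to\infty$ and arbitrarily close to $0$ along another. Two facts are used throughout. (i) The localization property gives: if $\Omega$ is open and $x\in\Omega$ then $U_n\ZI_\Omega(x)\to 1$ (apply it to $\ZI_\Omega$, which equals $1$ near $x$); if $S$ is disjoint from a neighbourhood of $x$ then $U_n\ZI_S(x)\to 0$ (apply it to $1-\ZI_S$); and both limits are attained uniformly when $x$ runs over a compact subinterval of a component of $\Omega$ (resp.\ of the complementary neighbourhood). (ii) From \e{2}, $|U_n\ZI_S(x)|\le M_n|S|$, which is effective precisely when $|S|$ is very small relative to $1/M_n$ and $n$ stays in a bounded range.

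\textbf{The oscillation mechanism.} Fix $x\in E$ and suppose $\ZI_G$ looks, near $x$, like $\ZI_{\Omega\setminus\Omega'}=\ZI_\Omega-\ZI_{\Omega'}$, where $\Omega'\subset\Omega$ are neighbourhoods of (a piece of) $E$ containing $x$, the rest of $G$ lies outside $\Omega$, and $|\Omega'|$ is tiny. Then $U_n\ZI_G(x)=U_n\ZI_{G\setminus\Omega}(x)+U_n\ZI_\Omega(x)-U_n\ZI_{\Omega'}(x)$, and for $n$ past the thresholds where $U_n\ZI_{G\setminus\Omega}(x)\approx 0$ and $U_n\ZI_\Omega(x)\approx 1$ but below the range where $M_n|\Omega'|$ becomes significant, $U_n\ZI_G(x)$ is within $o(1)$ of $1$. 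If instead $G$ contains no part of the layer $\Omega\setminus\Omega'$, then $U_n\ZI_G(x)=U_n\ZI_{G\setminus\Omega}(x)+U_n\ZI_{G\cap\Omega'}(x)$ is within $o(1)$ of $0$ on the same window. Alternating filled and cleared layers at finer and finer scales thus yields both halves of the conclusion for this $x$.

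\textbf{Global assembly.} The obstacle is to do this simultaneously for every point of the possibly dense --- hence non-closed --- set $E$ with one $G$: the ``large $n$'' thresholds above come from localization, which is uniform only on compact subintervals, and $E$ itself need not be contained in a closed set of small measure. I therefore replace the single descending chain of neighbourhoods by an exhausting family of compact test intervals: choosing open $\mathcal U_1,\mathcal U_2,\dots\supset E$ with $|\mathcal U_k|\to 0$ and writing $\mathcal U_k=\bigcup_\ell J_{k,\ell}$ (disjoint components), every $x\in E$ lies in $\overline{J_{k,\ell}}$ for infinitely many pairs $(k,\ell)$. Process the pairs in any order, performing an ``up'' and a ``down'' step for each. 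At the step treating $(k,\ell)$, with $K:=\overline{J_{k,\ell}}$ and the current set $G$: enclose $K$ in a slightly larger open interval $\Omega^{\mathrm{out}}$ (so $K$ is compactly inside one component of $\Omega^{\mathrm{out}}$), thin enough that altering $G$ inside $\Omega^{\mathrm{out}}$ perturbs each of the finitely many previously committed values $U_{m_j}\ZI_G(x)$ by at most a prescribed amount --- possible because the covers $\mathcal U_k$ may be chosen adaptively, after the earlier times are fixed, with $|\mathcal U_k|$ small relative to the previously used $M_{m_j}$'s; then put the layer $\Omega^{\mathrm{out}}\setminus\Omega^{\mathrm{in}}$ into $G$ (up step) or delete it from $G$ (down step), where $\Omega^{\mathrm{in}}$ is a much thinner open neighbourhood of $E\cap K$ inside $\Omega^{\mathrm{out}}$; commit a new time $m$ larger than all earlier committed ones and than the now genuinely uniform thresholds on $K$ arising from $U_n\ZI_{\Omega^{\mathrm{out}}}(x)\to 1$ and $U_n\ZI_{G\setminus\Omega^{\mathrm{out}}}(x)\to 0$; finally shrink $\Omega^{\mathrm{in}}$ so that $M_m|\Omega^{\mathrm{in}}|$ is tiny and impose a budget forcing all subsequent steps to move only a total mass negligible relative to $M_m$. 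The sets produced at successive steps differ by sets of summable measure, hence converge to a measurable $G$ for which each committed estimate survives; and each $x\in E$, lying in infinitely many of the $K$'s, inherits infinitely many $n$ with $U_n\ZI_G(x)>1-o(1)$ and infinitely many with $U_n\ZI_G(x)<o(1)$, which is the assertion.

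\textbf{Main obstacle.} The genuinely technical part is the bookkeeping of the parameters in the assembly --- the size of $J_{k,\ell}$, the thicknesses of $\Omega^{\mathrm{out}}$ and $\Omega^{\mathrm{in}}$, the committed time $m$, the residual budget --- to be fixed in the correct order so that at each step the crude bound $|U_n\ZI_S(x)|\le M_n|S|$ still leaves a non-empty usable window of $n$'s, while none of the finitely many earlier conclusions is spoilt. This is the only point where the hypotheses enter, via \e{2} and the localization property; organizing the construction around a countable family of compact intervals, rather than one shrinking sequence of neighbourhoods of $E$, is forced precisely by the fact that the uniformity in localization holds only on compact subintervals.
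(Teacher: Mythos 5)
Your architecture (alternating ``filled'' and ``cleared'' layers around shrinking neighbourhoods of $E$, committed times protected by a measure budget) is genuinely parallel to the paper's construction of $G=\bigcup_i(G_{2i-1}\setminus G_{2i})$, and reducing the Corollary to a general statement about operators with the localization property is exactly the paper's route. But your bookkeeping, which you correctly identify as the crux, cannot be closed with the only two tools you allow yourself, namely localization and the crude bound $|U_n\ZI_S(x)|\le M_n|S|$ from \e{2}. Here is the concrete failure. To shield a committed value $U_{m_j}\ZI_G(x)$ from all later modifications you need the \emph{total} mass moved after step $j$ to be $o(1/M_{m_j})$, since with the bound $M_n|S|$ a far-away modification costs exactly as much as a nearby one. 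The mass moved when you process the component $J_{k,\ell}$ is at least of the order $|J_{k,\ell}|$ (the up step must fill an annulus around $\overline{J_{k,\ell}}$), and this size is frozen the moment the cover $\mathcal U_k$ is chosen. For a dense null set $E$ every open cover of small measure has infinitely many components (a finite union of intervals containing a dense set has closure of full measure), so the components of $\mathcal U_k$ must be processed at infinitely many steps, interleaved with commitments of new times $m_j$ whose $M_{m_j}$ you do not control from above: $m_j$ is forced to be large by the localization thresholds, and nothing prevents $M_{m_j}$ from dwarfing the reciprocal of the still-unprocessed remainder of $\mathcal U_k$. ``Choosing the covers adaptively'' does not help, because at every stage there is already-committed, not-yet-processed mass whose size was fixed before $m_j$ was known. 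So the constraints ``tail mass $<\epsilon_j/M_{m_j}$ for every $j$'' and ``every $x\in E$ lies in infinitely many processed components'' are incompatible in your linear, one-interval-at-a-time scheme.

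The missing idea is the paper's Lemma: the localization property together with \e{2} forces a majorant $|K_n(x,t)|\le\phi(|x-t|)$ \emph{uniform in $n$} (inequality \e{5}), proved by a gliding-hump argument. This is what lets the paper protect infinitely many committed indices at once: when the new open set $G_{p+1}$ is inserted, its contribution to $U_{\nu(I)}\ZI_{G_{p+1}}(x)$ for $x\in I$ is split into the part inside $I^*=I\cup I^+\cup I^-$, controlled by $M_{\nu(I)}$ times a \emph{locally} imposed measure bound $\delta(I)$ (only three indices are involved), and the part outside $I^*$, controlled not by any $M_n$ but by $\phi(|J|/2)$ per far interval $J$, with the normalization $\delta(J)\le |J|/(6\phi(|J|/2)(p+1)^2)$ making the sum telescope to $\sum_J|J|\le b-a$. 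Without this uniform-in-$n$ kernel bound the far-field contributions cannot be summed over the infinitely many previously committed times, and your assembly stalls. Everything else in your outline (the oscillation mechanism, the use of $|E|=0$ to make the inner neighbourhoods arbitrarily small, the alternating limits $0$ and $1$ along two subsequences) matches the paper; you should add the Lemma, replace the single global budget by the local constraints $|G_{p+1}\cap I|<\delta(I)$, and organize the intervals into nested regular partitions so that each interval has well-defined neighbours $I^\pm$ with $2I\subset I^*$.
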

\begin{remark}
The function $f(x)$ in the corollary can not be continuous in general. There are variety of sequences of Fourier operators which uniformly converge while $f(x)$ is continuous.
\end{remark}
\noindent
The following lemma gives a bound for the kernels of operators \e{1} if $U_n$ has $L$-property.
\begin{lemma}
If the sequence of operators $U_n$ has $L$-property, then there exists a positive decreasing function $\phi(u)$, $u\in (0,+\infty)$, such that if $x\in [a,b]$ and $n\in\ZN $ then
\md1\label{5}
|K_n(x,t)|\le \phi(|x-t|) \hbox { for almost all } t\in [a,b].
\emd
\end{lemma}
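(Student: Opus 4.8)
The plan is to extract from the localization hypothesis a uniform bound on the kernels $K_n$ by means of the Banach--Steinhaus theorem, and then to glue the resulting local bounds together by a finite covering of $[a,b]$.

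First I would put the $L$-property into a ``difference'' form: if $I=(\alpha,\beta)\subset[a,b]$ and $h\in L^1(a,b)$ vanishes a.e.\ on $I$, then $U_nh(x)\to 0$ as $n\to\infty$, uniformly on every closed subset of $I$. Indeed $h=\ZI_I-(\ZI_I-h)$, where $\ZI_I$ and $\ZI_I-h$ both lie in $L^1(a,b)$ and equal $1$ on $I$; applying the $L$-property to each of them and subtracting gives the claim.

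Next, fix an open interval $I\subset[a,b]$ and a closed interval $J\subset I$. On the closed subspace $X_I\subset L^1(a,b)$ of functions vanishing a.e.\ on $I$, consider the operators $T_n h:=U_n h|_J$, with values in the Banach space $B(J)$ of bounded functions on $J$ equipped with the sup norm. By \e{2} each $T_n$ is bounded, and a short computation using the duality $(L^1)^*=L^\infty$ shows that $\|T_n\|=\sup_{x\in J}\|K_n(x,\cdot)\|_{L^\infty([a,b]\setminus I)}$. By the first step $T_n h\to 0$ in $B(J)$ for every $h\in X_I$, so the Banach--Steinhaus theorem gives
\[
\sup_{n}\,\sup_{x\in J}\,\|K_n(x,\cdot)\|_{L^\infty([a,b]\setminus I)}=:C(I,J)<\infty,
\]
that is, $|K_n(x,t)|\le C(I,J)$ for every $n$, every $x\in J$, and a.e.\ $t\in[a,b]\setminus I$.

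Finally I would build $\phi$. Given $\delta>0$, cover $[a,b]$ by finitely many closed intervals $J_1,\dots,J_m$, each of length $<\delta$ and contained in an open subinterval $I_k\subset[a,b]$ of length $<\delta$; then $x\in J_k$ together with $|x-t|\ge\delta$ forces $t\notin I_k$, so $|K_n(x,t)|\le\max_k C(I_k,J_k)=:\Psi(\delta)$ for all $n$, all $x\in[a,b]$, and a.e.\ $t$ with $|x-t|\ge\delta$. Running this with $\delta=1/j$, $j\in\ZN$, and letting $\phi$ be a positive non-increasing step function that majorizes the numbers $\Psi(1/j)$ at the matching scales, I would get $|K_n(x,t)|\le\phi(|x-t|)$ for every $x\in[a,b]$, every $n$, and a.e.\ $t\in[a,b]$. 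The hard part is the middle step: one must recognize that the \emph{uniform}-on-compacta form of localization (not mere pointwise convergence) is precisely what lets Banach--Steinhaus run with target space $B(J)$, and that the operator norm there equals the $L^\infty$-norm of the kernel. The difference trick of the first step is the device that makes the hypothesis usable, while the covering and the passage to a monotone $\phi$ are routine; the behaviour at the two endpoints of $[a,b]$ (treated with one-sided neighbourhoods) I would suppress.
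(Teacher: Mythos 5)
Your argument is correct, but it reaches the conclusion by a genuinely different route from the paper. The paper defines $\phi(u)$ directly as $\sup_{n,x}\essup_{|t-x|\ge u}|K_n(x,t)|$ and, assuming $\phi(u_0)=\infty$, runs an explicit gliding-hump (condensation of singularities) construction: it selects sets $E_k$ of small measure on which some kernel $K_{m_k}(x_k,\cdot)$ exceeds $k^3$, with the points $x_k$ accumulating in a fixed interval $I$ and the $E_k$ staying outside $2I$, arranges by induction that the cross-terms are controlled, and shows that $g=\sum_i \ZI_{E_i}/i^2$ satisfies $|U_{m_k}g(x_k)|\ge k-2$, contradicting the uniform convergence $U_ng\to 0$ on $I$ that the $L$-property forces for a function vanishing near $I$. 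You instead package exactly this condensation-of-singularities mechanism into the Banach--Steinhaus theorem: after the same difference trick (which the paper also uses implicitly to get $U_ng\to 0$ on $I$), you apply uniform boundedness to $T_n:X_I\to B(J)$ and identify $\|T_n\|$ with $\sup_{x\in J}\|K_n(x,\cdot)\|_{L^\infty([a,b]\setminus I)}$ --- a computation that does hold by $L^1$--$L^\infty$ duality and is worth writing out --- and then recover a monotone majorant by a finite covering at each scale $\delta=1/j$. What your approach buys is brevity and transparency: the lemma is revealed as literally a uniform boundedness statement, and the delicate inductive bookkeeping (conditions such as $|E_k|\cdot\max_{i<k}M_{m_i}<1$) disappears into the black box of the theorem. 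What the paper's approach buys is self-containedness: it re-proves the needed instance of Banach--Steinhaus by hand and produces the canonical (smallest possible) $\phi$ in one formula rather than through a covering and re-monotonization. Both treatments share the same two soft spots, which you correctly flag as routine: the endpoint behaviour at $a$ and $b$, where the $L$-property as stated gives no closed neighbourhood, and the fact that the exceptional null set in $t$ may depend on $x$ and $n$, which is all the statement of the lemma requires.
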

\begin{proof}
We define
\md0
\phi(u)=\sup_{n\in \ZN,x\in[a,b]}\essup_{t:|t-x|\ge u}|K_n(x,t)|,
\emd
where $\essup_{t\in A}|g(t)|$ denotes $\|g\|_{L^\infty(A)}$.
It is clear $\phi(u)$ is decreasing and satisfies \e{5}, provided $\phi(u)<\infty $, $u>0$.
To prove $\phi(u)$ is finite, let us suppose the converse, that is $\phi(u_0)=\infty$ for some $u_0>0$. It means for any $\gamma >0$ there exist $l_\gamma\in \ZN$ and $c_\gamma\in [a,b]$ such that
\md1\label{92}
|K_{l_\gamma}(c_\gamma,t)|>\gamma,\quad t\in E_\gamma\subset [a,b]\setminus (c_\gamma-u_0,c_\gamma+u_0),\quad |E_\gamma |>0.
\emd
Consider the sequences $c_k$ and $l_k$ corresponding to the numbers $\gamma_k=k$, $k=1,2,\ldots $. We can fix an interval $I$ with $|I|=u_0/3$, where the sequence $\{c_k\}$ has infinitely many terms. Using this it can be supposed $c_\gamma\in I$ in \e{92} and therefore we will have $2I\subset (c_\gamma-u_0,c_\gamma+u_0)$ . So we can write
\md1\label{93}
c_\gamma\in I,\quad E_\gamma\subset [a,b]\setminus 2I.
\emd
Thus we chose a sequence $\gamma_k\nearrow \infty $ such that for corresponding sequences $m_k=l_{\gamma_k}$, $x_k=c_{\gamma_k}$ and $E_k=E_{\gamma_k}$ we have
\md3
x_k\subset I,\quad E_k\subset (a,b)\setminus 2I,\label{01}\\
|K_{m_k}(x_k,t)|\ge k^3,\quad t\in E_k,\label{9}\\
\sup_{1\le i<k}|U_{m_k}\ZI_{E_i}(x)|<1,\quad x\in I,\label{10}\\
|E_k|\cdot \max_{1\le i<k}M_{m_i}<1,(k>1).\label{11}
\emd
We do it by induction. Taking $\gamma_1=1$ we will get $m_1$ satisfying \e{9}. This follows from \e{92}. Now suppose we have already chosen the numbers $\gamma_k$ and $m_k$ satisfying \e{01}-\e{11} for $k=1,2,\ldots ,p$. According to $L$-property
$U_n\ZI_{E_i}(x)$ converge to $0$ uniformly in $I$ for any $i=1,2,\ldots ,p$. On the other hand because of \e{2} and \e{92} from $\gamma\to\infty $ follows $l_\gamma\to\infty $. Hence we can chose a number $\gamma_{p+1}>(p+1)^3$ such that the corresponding $m_{p+1}$ satisfies the inequality
\md1\label{14}
|U_{m_{p+1}}\ZI_{E_i}(x)|<1,\, x\in I,\, i=1,2,\ldots ,p.
\emd
This gives \e{10} in the case $k=p+1$. According to \e{92} and the bound $\gamma_{p+1}>(p+1)^3$ we will have also \e{9}. Finally, since $E_{p+1}$ may have enough small measure we can guarantee \e{11} for $k=p+1$. So the construction of the sequence $\gamma_k$ with \e{01}-\e{11} is complete. Now consider the function
\md1\label{15}
g(x)=\sum_{i=1}^\infty \frac{\ZI_{E_i}(x)}{k^2}.
\emd
We have $g\in L^1$ and $\supp g\subset [a,b]\setminus (2I)$.
Since $x_k\in I$, using the relations  \e{01}-\e{11}, we obtain
\md8
|U_{m_k}g(x_k)|\ge \\
&\ge\frac{|U_{m_k}\ZI_{E_k}(x_k)|}{k^2}-\sum_{i=1}^{k-1}\frac{|U_{m_k}\ZI_{E_i}(x_k)|}{i^2}-
\sum_{i=k+1}^\infty \frac{|U_{m_k}\ZI_{E_i}(x_k)|}{i^2}\\
&\ge k-\sum_{i=1}^{k-1}\frac{1}{i^2}-M_{m_k}\sum_{i=k+1}^\infty \frac{|E_i|}{i^2}\ge k-2.
\emd
This is a contradiction, because the convergence $U_ng(x)\to 0$ is uniformly on $I$ according to $L$-property.
\end{proof}

 We say a family $\zI$ of mutually disjoint semi-open intervals is a regular partition for an open set $G\subset (a,b)$ if $G=\cup_{I\in\zI}I$ and each interval $I\in \zI$ has two adjacent intervals $I^+,I^-\in \zI$ with
\md1\label{23}
2I\subset I^*=I\cup I^+\cup I^-.
\emd
 It is clear any open set has a regular partition.
 \begin{proof}[Proof of Theorem]
 For a given set $E$ of measure zero we will construct a definite sequence of open sets $G_k,k=1,2,\ldots,$ with regular partitions $\zI_k,\, k=1,2,\ldots $. They will satisfy the conditions

  1) if $I\in\zI_k$ and $I=[\alpha,\beta )$ then $\alpha,\beta\not\in E$,

  2) if $I,J\in \cup_{j=1}^k\zI_j$ then $J\cap I\in \{\varnothing,\, I,\, J\}$,

  3) $E\subset G_k\subset G_{k-1},(G_0=[a,b]).$

\noindent
In addition for any interval $I\in \zI $ we fix a number $\nu(I)\in \ZN$ such that

 4) if $I,J\in \cup_{j=1}^k\zI_j$ and $I\subset J$ then $\nu(I)\ge \nu(J)$,

 5) $\sup_{x\in I}|U_{\nu(I)}\ZI_{G_l}(x)-1|<1/k^2$, if $I\in \zI_k$ and $l\le k$,

 6) $\sup_{x\in I}|U_{\nu(I)}\ZI_{G_{k}}(x)|<1/k^2$, if $I\in \zI_l$ and $l<k$.

 \noindent
 We define $G_1$ and its partition $\zI_1$ arbitrarily, just ensuring the condition 1). It may be done because $|E|=0$ and so $E^c$ is everywhere dense on $[a,b]$. Then using $L$-property for any interval $I\in\zI_1$ we can find a number $\nu(I)\in \ZN$ satisfying 5) for $k=1$. Now suppose we have already chosen $G_k$ and $\zI_k$ with the conditions 1)-6) for all $k\le p$. Obviously we can chose an open set $G_{p+1}$, $E\subset G_{p+1}\subset G_p$, satisfying 1), 2) and the bound
 \md0
 |G_{p+1}\cap I|<\delta(I),\quad I\in \cup_{k=1}^p\zI_k,
 \emd
 where
 \md0
 \delta(I)=\frac{1}{6(p+1)^2\max\left\{M_{\nu(I)},M_{\nu(I^+)},M_{\nu(I^-)},\frac{\phi(|I|/2)}{|I|}\right\}},
 \emd
and the function $\phi(u)$ is taken from the lemma. Suppose $I\in \zI_l$ and $l<p+1$. We have
\md1\label{26}
|U_{\nu(I)}\ZI_{G_{p+1}}(x)|\le |U_{\nu(I)}\ZI_{G_{p+1}\cap I^*}(x)|+|U_{\nu(I)}\ZI_{G_{p+1}\cap (I^*)^c}(x)|.
\emd
Using the lemma and the bound
\md0
\delta(J)\le \frac{|J|}{6\phi(|J|/2)(p+1)^2},\quad J\in \zI_l,
\emd
for any $x\in I$ we get
\md9\label{24}
|U_{\nu(I)}\ZI_{G_{p+1}\cap (I^*)^c}(x)|\\
&\le \sum_{J\in\zI_l:J\neq I,I^+,I^-}\int_{G_{p+1}\cap J}\phi(|x-t|)dt\\
&\le \sum_{J\in\zI_l:J\neq I,I^+,I^-}\int_{G_{p+1}\cap J}\phi\left(\frac{|J|}{2}\right)dt\\ &\le \sum_{J\in\zI_l:J\neq I,I^+,I^-}|G_{p+1}\cap J|\phi\left(\frac{|J|}{2}\right)\\
&\le \sum_{J\in\zI_l:J\neq I,I^+,I^-}\delta(J)\phi\left(\frac{|J|}{2}\right)\\
&\le \frac{1}{6(p+1)^2}\sum_{J\in\zI_l:J\neq I,I^+,I^-}|J|
<\frac{1}{6(p+1)^2},\quad x\in I.
\emd
On the other hand we have
\md0
\delta(I),\delta(I^+),\delta(I^-)\le \frac{1}{6\cdot (p+1)^2M_{\nu(I)}},
\emd
and therefore
\md9\label{25}
|U_{\nu(I)}\ZI_{G_{p+1}\cap I^*}(x)|&\le M_{\nu(I)}|G_{p+1}\cap I^*|\\
&\le M_{\nu(I)}(\delta(I)+\delta(I^+)+\delta(I^-))
\le \frac{1}{2(p+1)^2},\quad x\in [a,b].
\emd
Combining \e{26},\e{24} and \e{25} we get 6) in the case $k=p+1$. Now we chose the partition $\zI_{p+1}$ satisfying just conditions 1) and 2). Using $L$ property we may define numbers $\nu(I)$ for $I\in \zI_{p+1}$ satisfying the condition 5) with $k=p+1$. Hence the construction of the sets $G_k$ is complete.
Now denote
\md0
G=\bigcup_{i=1}^\infty (G_{2i-1}\setminus G_{2i}),
\emd
we have
\md0
U_n\ZI_G(x)=\sum_{k=1}^\infty (-1)^{k+1}U_n\ZI_{G_k}(x).
\emd
For any $x\in E$ there exists a unique sequence $I_1\supset I_2\supset\ldots\supset I_k\supset \ldots $, $I_k\in \zI_k$, such that $x\in I_k$, $k=1,2,\ldots $. According to 6) for $l> k$ we have
\md0
|U_{\nu(I_k)}\ZI_{G_l}(x)|\le \frac{1}{l^2},\quad l>k.
\emd
From 5) it follows that
\md0
|U_{\nu(I_k)}\ZI_{G_l}(x)-1|\le \frac{1}{k^2},\quad l\le k.
\emd
Thus we obtain
\md6
|U_{\nu(I_k)}\ZI_G(x)-\sum_{l=1}^k(-1)^{l+1}|\\
\le \sum_{l=1}^k|U_{\nu(I_k)}\ZI_{G_l}(x)-1|+\sum_{l=k+1}^\infty |U_{\nu(I_k)}\ZI_{G_l}(x)|\\
\le k\cdot \frac{1}{k^2}+\sum_{l=k+1}^\infty \frac{1}{l^2}<\frac{2}{k}
\emd
Since the sum $\sum_{i=1}^k(-1)^{k+1}$ takes values $0$ and $1$ alternately we get
\md0
\lim_{t\to \infty} U_{\nu(I_{2t})}\ZI_G(x)=0,\quad
\lim_{t\to \infty} U_{\nu(I_{2t+1})}\ZI_G(x)=1
\emd
for any $x\in E$. The proof of theorem is complate.
\end{proof}

\end{document}